\newcommand\tsup[2][2]{%
 \def\useanchorwidth{T}%
  \ifnum#1>1%
    \stackon[-1pt]{\tsup[\numexpr#1-1\relax]{#2}}{\hspace{1pt}\scriptstyle\sim}%
  \else%
    \stackon[.5pt]{#2}{\hspace{1pt}\scriptstyle\sim}%
  \fi%
}
\newcommand{\nc}{\newcommand}
\nc{\cO}{\mathcal{O}}
\nc{\ram}{\mathsf{Ramsey}}
\nc{\soo}{\mathsf{S}_1(\cO,\cO)}
\nc{\swl}{\mathsf{S}_1(\Om,\Lambda)}
\nc{\goo}{\gone(\cO,\cO)}
\nc{\gwo}{\gone(\Om,\cO)}
\nc{\gwl}{\gone(\Om,\Lambda)}
\nc{\soox}[1]{\mathsf{S}_1(\cO(#1),\cO(#1))}
\nc{\swlx}[1]{\mathsf{S}_1(\Om(#1),\Lambda(#1))}
\nc{\goox}[1]{\gone(\cO(#1),\cO(#1))}
\nc{\gwox}[1]{\gone(\Om(#1),\cO(#1))}
\nc{\gwlx}[1]{\gone(\Om(#1),\Lambda(#1))}
\nc{\sfinoo}{\mathsf{S}_{\mathrm{fin}}(\cO,\cO)}
\nc{\sfinwl}{\mathsf{S}_{\mathrm{fin}}(\Om,\Lambda)}
\nc{\sfinww}[1]{\mathsf{S}_{\mathrm{fin}}(\Om(#1),\Om(#1))}
\nc{\gfinoo}{\gfin(\cO,\cO)}
\nc{\gfinwo}{\gfin(\Om,\cO)}
\nc{\gfinwl}{\gfin(\Om,\Lambda)}
\nc{\sfinoox}[1]{\mathsf{S}_{\mathrm{fin}}(\cO(#1),\cO(#1))}
\nc{\sfinwlx}[1]{\mathsf{S}_{\mathrm{fin}}(\Om(#1),\Lambda(#1))}
\nc{\sfinwwx}[1]{\mathsf{S}_{\mathrm{fin}}(\Om(#1),\Om(#1))}
\nc{\gfinoox}[1]{\gfin(\cO(#1),\cO(#1))}
\nc{\gfinwox}[1]{\gfin(\Om(#1),\cO(#1))}
\nc{\gfinwlx}[1]{\gfin(\Om(#1),\Lambda(#1))}
\nc{\mc}{\mathcal}
\nc{\thusfar}{\my{--- Edited thus far ---}}
\nc{\lei}{\le^\oo}
\nc{\sqsubs}{\sqsubseteq^*}
\nc{\card}[1]{\left|#1\right|}
\nc{\medcard}[1]{\biggl|\,#1\,\biggr|}
\nc{\smallcard}[1]{|\,#1\,|}
\nc{\bds}{bidirectional $\roth$-scale}
\nc{\bfP}{\mathbf{P}}
\nc{\bfQ}{\mathbf{Q}}
\nc{\bbT}{\mathbb{T}}
\nc{\bbZ}{\mathbb{Z}}
\nc{\bbN}{\mathbb{N}}
\nc{\bbC}{\mathbb{C}}
\nc{\beq}{\begin{equation}}\nc{\eeq}{\end{equation}}
\nc{\mbq}{\mb{?}}
\nc{\mb}[1]{{\mbox{\textbf{#1}}}}
\nc{\nop}{$\times$}
\nc{\fbn}{\!\!\fbox{\!\nop\!}\!\!}
\nc{\yup}{\checkmark}
\nc{\forces}{\Vdash}
\nc{\name}[1]{\dot{#1}}
\nc{\tf}{\my{FINISHED THUS FAR}}
\nc{\FU}{Fr\'echet--Urysohn}
\nc{\gs}{$\gamma$~space}
\nc{\Gab}{\Gamma_{\mathrm{B}}}
\nc{\Omb}{\Omega_{\mathrm{B}}}
\nc{\Ga}{\Gamma}
\nc{\Om}{\Omega}
\nc{\smallbinom}[2]{\begin{psmallmatrix} #1\\ #2 \end{psmallmatrix}}
\nc{\bgamma}{\smallbinom{\Om}{\Ga}}
\nc{\productive}[2]{(#1,\allowbreak #2)^\x}
\nc{\prdct}[1]{#1^\x}
\nc{\Sel}{\mathsf{S}}
\nc{\sset}[2]{\{\,#1 : #2\,\}}
\nc{\smb}[1]{{\!\!\mb{#1}\!\!}}
\nc{\medset}[2]{{\biggl\{\,#1 : #2\,\biggr\}}}
\nc{\smallmedset}[2]{{\bigl\{\,#1 : #2\,\bigr\}}}
\nc{\set}[2]{{\left\{\,#1 : #2\,\right\}}}
\nc{\seq}[2]{{\la\, #1 : #2\,\ra}}
\nc{\eseq}[1]{#1_0, \allowbreak #1_1, \allowbreak\dotsc} 
\nc{\eseqint}[3]{#1_{#2}, \allowbreak\dotsc,\allowbreak #1_{#3}} 
\nc{\eseqstart}[2]{#1_{#2},\allowbreak #1_{#2+1},\dotsc } 
\nc{\eprod}[1]{#1_{1}\times \allowbreak#1_{2}\times\dotsb}
\nc{\shortprod}[1]{\prod_{n=1}^\infty{#1}_n}
\nc{\eprodint}[3]{#1_{#2}\times \allowbreak\dotsb\times\allowbreak #1_{#3}}
\nc{\seleseq}[1]{#1_1\in \mathcal{#1}_1, \allowbreak #1_2\in \mathcal{#1}_2, \allowbreak\dotsc}
\nc{\cube}{(\Cantor)^\bbN}
\nc{\Match}{\op{Match}}
\nc{\concat}[1]{\hat{\phantom{a}}\langle #1\rangle}
\nc{\poset}{\mathbb{P}}
\nc{\fn}[1]{{\op{Fn}(#1\times\w,2)}}
\nc{\linadd}{\op{linadd}}
\nc{\nonprod}{\non^\x}
\nc{\alephes}{{\aleph_0}}
\nc{\my}[1]{{\color{red}{#1}}}
\nc{\later}[1]{{\color{green} #1}}
\nc{\BTs}[1]{{\color{green} #1 (BT)}}
\nc{\Cp}{\op{C}_\mathrm{p}}
\nc{\Bp}{\op{B}_p}
\nc{\Pa}[8]{\bibitem{#1} {#2}, \emph{#3}, {#4} \textbf{#5} ({#6}), {#7}--{#8}.}
\nc{\tPa}[5]{\bibitem{#1} {#2}, \emph{#3}, {#4}, to appear.}
\nc{\sPa}[4]{\bibitem{#1} {#2}, \emph{#3}, {#4}, submitted.}
\nc{\Bc}[9]{\bibitem{#1} {#2}, \emph{#3}, in: \textbf{#4} (#5), #6 #7, #8--#9.}
\nc{\fD}{\mathfrak{D}}
\nc{\fX}{\mathfrak{X}}
\nc{\Onbd}{\Op_{\mathrm{nbd}}} 
\nc{\Omnb}{\Om_{\mathrm{nbd}}} 
\nc{\od}{\mathfrak{od}}
\nc{\Setting}[7]{\xymatrix@R=4pt@C=7pt{#1\ar@{-}[r]&#2\ar@{-}[r]&#3\\&#4\ar@{-}[u]\\
#5\ar@{-}[uu]\ar@{-}[r] & #6\ar@{-}[u]\ar@{-}[r] & #7\ar@{-}[uu]}}
\nc{\mx}[1]{\begin{matrix}#1\end{matrix}}
\nc{\plim}{p\txt{-}\lim}
\nc{\Bgp}{{\Z^\bbN}}
\nc{\Cgp}{{{\Z_2}^\bbN}}
\nc{\Cite}[1]{\textbf{[#1]}}
\nc{\Next}[1]{{#1^+}}
\nc{\cFin}{\mathrm{cF}}
\nc{\scsp}{\text{-scale space}}
\nc{\cfn}{\text{cofinal}\ }
\nc{\Con}{\text{Concentrated}}
\nc{\Lind}{\text{Lindel\"of}\,}
\nc{\con}{\text{-Concentrated}}
\nc{\lind}{\text{-Lindel\"of}\,}
\nc{\ctbl}{\text{countably }\allowbreak}
\nc{\Hur}{\text{Hurewicz}}
\nc{\intvl}[2]{{[#1(#2),\allowbreak #1(#2\!+\!1))}}
\nc{\Bdd}{\mathbf{B}}
\nc{\Dfin}{\mathfrak{D}_\mathrm{fin}}
\nc{\grbl}{{\mbox{\textit{\tiny gp}}}}
\nc{\bbP}{\mathbb{P}}
\nc{\bbM}{\mathbb{M}}
\nc{\bbQ}{\mathbb{Q}}
\nc{\bbH}{\mathbb{H}}
\nc{\BOfat}{\B_{\Om_{\mathrm{fat}}}}
\nc{\Bgood}{\B_{\mathrm{good}}}
\nc{\compactN}{\cl{\mathbb{N}}}
\nc{\blocks}[2]{\op{cl}_{#2}(#1)}
\nc{\blocksplus}[2]{\op{cl}^+_{#2}(#1)}
\nc{\arx}[1]{\texttt{http://arxiv.org/math/#1}}
\nc{\bq}{\begin{quote}}
\nc{\eq}{\end{quote}}
\nc{\cl}[1]{\overline{#1}}
\nc{\Cl}[2]{\mathrm{cl}_{#1}(#2)}
\nc{\CH}{the Continuum Hypothesis}
\nc{\MA}{Martin's Axiom}
\nc{\Bfat}{\B_\mathrm{fat}}
\nc{\inv}{^{-1}}
\nc{\Cantor}{{2^\w}}
\nc{\bP}{\mathbf{P}}
\nc{\bof}{\op{\fb}}
\nc{\dof}{\op{\fd}}
\nc{\bofF}{\bof(\cF)}
\nc{\sr}[3]{\underset{\mbox{#3}}{\mbox{#1}}}
\nc{\gp}{\binom{\Om}{\Ga}}
\nc{\gpsmall}{\mbox{$\gp$}}
\nc{\gig}{\gimel}
\nc{\gns}{\sone(\Om,\gig)}
\nc{\nsr}[2]{#1}
\nc{\Srg}{{\mathbb{S}}}
\nc{\Srgs}{{\mathbb{S}^*}}
\nc{\NN}{{\w^{\w}}}
\nc{\ZN}{{\Z^{\bbN}}}
\nc{\NNup}{{\w^{\uparrow\w}}}
\nc{\NNbarup}{{\overline{\w}}^{\uparrow\w}}
\nc{\NNupb}{{b^{\uparrow\bbN}}}
\nc{\Pof}{\op{P}}
\nc{\PN}{{\Pof(\w)}}
\nc{\rothx}[1]{{[#1]^{\mbox{\tiny $\infty$}}}}
\nc{\tx}{{\tilde{x}}}
\nc{\roth}{{[\w]^{\w}}}
\nc{\roths}{{[b]^{\mbox{\tiny $\infty$}}}} 
\nc{\Fin}{\mathrm{Fin}}
\nc{\ici}{[\bbN]^{ \infty, \infty}}
\nc{\Inc}{{\compactN^{\uparrow\bbN}}}
\nc{\powInc}[1]{{\big(\Inc\big)^{#1}}}
\nc{\powFin}[1]{{\big(\Fin\big)^{#1}}}
\nc{\powPN}[1]{{\big(\PN\big)^{#1}}}
\nc{\NcompactN}{{\compactN^\bbN}}
\nc{\Uarrow}{\smash{\big\uparrow}}
\nc{\LE}{\preccurlyeq}
\nc{\GE}{\succcurlyeq}
\nc{\op}{\operatorname}
\nc{\im}{\op{Im}}
\nc{\Span}{\op{span}}
\nc{\maxfin}{\op{maxfin}}
\nc{\ran}{\op{range}}
\nc{\iso}{\cong}
\nc{\Madd}{{\M}^\star}
\nc{\cI}{\mathcal{I}}
\nc{\cJ}{\mathcal{J}}
\nc{\scrA}{\mathscr{A}}
\nc{\scrB}{\mathscr{B}}
\nc{\scrC}{\mathscr{C}}
\nc{\scrD}{\mathscr{D}}
\nc{\scrF}{\mathscr{F}}
\nc{\scrK}{\mathscr{K}}
\nc{\A}{\D\forall}
\nc{\B}{\mathrm{B}}
\nc{\cB}{\mathcal{B}}
\nc{\cZ}{\mathcal{Z}}
\nc{\bB}{\mathbf{B}}
\nc{\BS}{\mathbf{B}(\mathcal{S})}
\nc{\BF}{\mathbf{B}(\mathcal{F})}
\nc{\BU}{\mathbf{B}(\mathcal{U})}
\nc{\cSp}{\mathcal{S}^+}
\nc{\cFp}{\mathcal{F}^+}
\nc{\cUp}{\mathcal{U}^+}
\nc{\BG}{\B_\Ga}
\nc{\BL}{\B_\Lambda}
\nc{\BT}{\B_\Tau}
\nc{\BTstar}{\B_{\Tau^*}}
\nc{\BO}{\B_\Om}
\nc{\DO}{\cD_\Om}
\nc{\KO}{\cK_\Om}
\nc{\CG}{C_\Ga}
\nc{\CL}{C_\Lambda}
\nc{\CT}{C_\Tau}
\nc{\CTstar}{C_{\Tau^*}}
\nc{\CO}{C_\Om}
\nc{\COgp}{C_{\Om^{\grbl}}}
\nc{\CLgp}{C_{\Lambda^{\grbl}}}
\nc{\BOgp}{\B_{\Om}^{\grbl}}
\nc{\BLgp}{\B_{\Lambda^{\grbl}}}
\nc{\sfC}{\mathsf{C}}
\nc{\sfD}{\mathsf{D}}
\nc{\bD}{\mathbf{D}}
\nc{\Tau}{\mathrm{T}}
\nc{\cA}{\mathcal{A}}
\nc{\cK}{\mathcal{K}}
\nc{\cD}{\mathcal{D}}
\nc{\cF}{\mathcal{F}}
\nc{\cS}{\mathcal{S}}
\nc{\cT}{\mathcal{T}}
\nc{\cG}{\mathcal{G}}
\nc{\cY}{\mathcal{Y}}
\nc{\J}{\mathcal{J}}
\nc{\cL}{\mathcal{L}}
\nc{\cM}{\mathcal{M}}
\nc{\cN}{\mathcal{N}}
\nc{\cH}{\mathcal{H}}
\nc{\Op}{\mathrm{O}}
\nc{\rmA}{\mathrm{A}}
\nc{\rmF}{\mathrm{F}}
\nc{\rmB}{\mathrm{B}}
\nc{\rmD}{\mathrm{D}}
\nc{\rmP}{\mathrm{P}}
\nc{\cC}{\mathcal{C}}
\nc{\cP}{\mathcal{P}}
\nc{\bbR}{\mathbb{R}}
\nc{\bbS}{\mathbb{S}}
\nc{\cU}{\mathcal{U}}
\nc{\cQ}{\mathcal{Q}}
\nc{\Un}{\bigcup}
\nc{\cV}{\mathcal{V}}
\nc{\cR}{\mathcal{R}}
\nc{\tcR}{\tilde{\mathcal{R}}}
\nc{\cW}{\mathcal{W}}
\nc{\Z}{{\mathbb Z}}
\nc{\Impl}{\Rightarrow}
\long\def\forget#1\forgotten{\marginpar{\textcolor{green}{Forgetting...}}}
\nc{\ft}{\mathfrak{t}}
\nc{\fb}{\mathfrak{b}}
\nc{\fc}{\mathfrak{c}}
\nc{\fd}{\mathfrak{d}}
\nc{\fg}{\mathfrak{g}}
\nc{\oo}{\infty}
\nc{\fr}{\mathfrak{r}}
\nc{\fk}{\mathfrak{k}}
\nc{\bidi}{\mathfrak{bidi}}
\nc{\fu}{\mathfrak{u}}
\nc{\fh}{\mathfrak{h}}
\nc{\fp}{\mathfrak{p}}
\nc{\fj}{\mathfrak{j}}
\nc{\fs}{\mathfrak{s}}
\nc{\w}{\omega}
\nc{\x}{\times}
\nc{\Iff}{\Leftrightarrow}
\nc{\nin}{\notin}
\nc{\cat}{\hat{\ }}
\nc{\sub}{\subseteq}
\nc{\spst}{\supseteq}
\nc{\sm}{\setminus}
\nc{\as}{\subseteq^*}
\nc{\les}{\le^*}
\nc{\leinf}{\le^{\infty}}
\nc{\leS}{\le_S}
\nc{\leF}{\le_F}
\nc{\leU}{\le_U}
\nc{\gew}{\geq^\textrm{w}}
\nc{\rest}{\restriction}
\nc{\la}{\langle}
\nc{\ra}{\rangle}
\nc{\E}{\exists}
\nc{\dom}{\op{dom}}
\nc{\cov}{\op{cov}}
\nc{\add}{\op{add}}
\nc{\addmen}{\add(\Men{})}
\nc{\cof}{\op{cof}}
\nc{\cf}{\op{cf}}
\nc{\non}{\op{non}}
\nc{\unif}{\op{non}}
\nc{\COV}{\op{COV}}
\nc{\ADD}{\op{ADD}}
\nc{\COF}{\op{COF}}
\nc{\NON}{\op{NON}}
\nc{\supp}{\op{supp}}
\nc{\impl}{\to}
\nc{\Lp}{\mathcal{L_\p}}
\nc{\Wlog}{without loss of generality}
\newtheorem{thm}{Theorem}[section]
\nc{\bthm}{\begin{thm}} \nc{\ethm}{\end{thm}}
\newtheorem{need}[thm]{Need}
\nc{\bneed}{\begin{need}\color{dg}} \nc{\eneed}{\end{need}}
\newtheorem{prop}[thm]{Proposition}
\nc{\bprp}{\begin{prop}} \nc{\eprp}{\end{prop}}
\newtheorem{fact}[thm]{Fact}
\nc{\bfct}{\begin{fact}} \nc{\efct}{\end{fact}}
\newtheorem{prob}[thm]{Problem}
\nc{\bprb}{\begin{prob}} \nc{\eprb}{\end{prob}}
\newtheorem{lem}[thm]{Lemma}
\nc{\blem}{\begin{lem}} \nc{\elem}{\end{lem}}
\newtheorem{app}[thm]{Application}
\nc{\bapp}{\begin{app}} \nc{\eapp}{\end{app}}
\newtheorem{claim}[thm]{Claim}
\nc{\bclm}{\begin{claim}} \nc{\eclm}{\end{claim}}
\newtheorem{cor}[thm]{Corollary}
\nc{\bcor}{\begin{cor}} \nc{\ecor}{\end{cor}}
\newtheorem{conj}[thm]{Conjecture}
\nc{\bcnj}{\begin{conj}} \nc{\ecnj}{\end{conj}}
\theoremstyle{definition}
\newtheorem{defn}[thm]{Definition}
\nc{\bdfn}{\begin{defn}} \nc{\edfn}{\end{defn}}
\newtheorem{obs}[thm]{Observation}
\nc{\bobs}{\begin{obs}} \nc{\eobs}{\end{obs}}
\theoremstyle{remark}
\newtheorem{rem}[thm]{Remark}
\nc{\brem}{\begin{rem}} \nc{\erem}{\end{rem}}
\newtheorem{cnv}[thm]{Convention}
\nc{\bcnv}{\begin{cnv}} \nc{\ecnv}{\end{cnv}}
\newtheorem{exam}[thm]{Example}
\nc{\bexm}{\begin{exam}} \nc{\eexm}{\end{exam}}
\nc{\bpf}{\begin{proof}} \nc{\epf}{\end{proof}
}
\nc{\be}{\begin{enumerate}}
\nc{\ee}{\end{enumerate}}
\nc{\bi}{\begin{itemize}}
\nc{\bimy}{\my{\begin{itemize}}
\nc{\eimy}{\end{itemize}}}
\nc{\itm}{\item}
\nc{\ei}{\end{itemize}}
\nc{\Subsection}[1]{\goodbreak\subsection*{#1}}
\nc{\sone}{\mathsf{S}_1}
\nc{\sfin}{\mathsf{S}_\mathrm{fin}}
\nc{\ufin}{\mathsf{U}_\mathrm{fin}}
\nc{\Split}{\mathsf{Split}}
\nc{\gone}{\mathsf{G}_1}    
\nc{\tgfin}{\tilde{\mathsf{G}}_\mathrm{fin}}
\nc{\gfin}{\mathsf{G}_\mathrm{fin}}
\nc{\men}[1]{\sfin(\Op(#1),\Op(#1))}
\nc{\sch}{\ufin(\cO,\Omega)}
\nc{\rothb}{\text{Rothberger}}
\nc{\pmen}{\sfin(\Omega,\Omega)}
\nc{\Rothb}{\sone(\Op,\Op)}
\nc{\prothb}{\sone(\Omega,\Omega)}
\nc{\tU}{{\tilde{U}}}
\nc{\tF}{{\tilde{F}}}
\nc{\tY}{{\tilde{Y}}}
\nc{\tX}{{\tsup[1]{X}}}
\nc{\dtX}{{\tsup[2]{X}}}
\nc{\dt}[1]{{\tsup[2]{#1}}}
\nc{\td}{{\tilde{d}}}
\nc{\tz}{{\tilde{z}}}
\nc{\cfd}{\cf(\fd)}
\nc{\msep}{\sfin(\cD,\cD)}
\nc{\rsep}{\sone(\cD,\cD)}
\nc{\cft}{\sfin(\Omega_{\mathbf{0}},\Omega_{\mathbf{0}})}
\nc{\scft}{\sone(\Omega_{\mathbf{0}},\Omega_{\mathbf{0}})}
\nc{\Umen}{U\text{-Menger}}
\nc{\hur}{\ufin(\cO,\Gamma)}
\nc{\tUmen}{\tU\text{-Menger}}
\nc{\Men}{\text{Menger}}
\nc{\Sch}{\text{Scheepers}}
\nc{\aspst}{\prescript{*}{}{\spst}\ }
\nc{\eqs}{=^*}
\nc{\ctblOm}{\Omega_{\mathrm{ctbl}}}
\nc{\GNga}{{\smallbinom{\Om}{\Ga}}}
\nc{\ctblga}{\smallbinom{\ctblOm}{\Ga}}
\nc{\nadd}{\cN_{\mathrm{add}}}
\nc{\ball}{\mathrm{B}}
\nc{\cOunif}{\cO^{\textrm{unif}}}
\nc{\sep}{
\vspace{2cm}
\noindent
\begin{minipage}{\textwidth}
	\textcolor{red}{\rule{\textwidth}{1pt}}
\end{minipage}
}
\nc{\FS}{\op{FS}}
\nc{\sums}{\op{SS}}
\nc{\SG}{\op{SG}}
\nc{\tSG}{\op{\widetilde{SG}}}
\nc{\G}{\op{G}}
\nc{\pBM}{\op{wgM}}
\nc{\FSG}{\op{FSG}}
\nc{\M}{\op{M}}
\nc{\gM}{\op{gM}}
\nc{\FP}{\op{FP}}
\nc{\nonNadd}{\non(\nadd)}
\nc{\borga}{\Ga_\mathrm{Bor}}
\nc{\pick}{x}
\nc{\gen}{y}
\nc{\nullzind}{\sone(\{\Op_n^{\mathsf{unif}}\}_{n\in\bbN},\Ga)}
\nc{\nullzindf}[1]{\sone(\{\Op_{#1}^{\mathsf{unif}}\}_{n\in\bbN},\Ga)}
\definecolor{dg}{RGB}{42,101,24}
\nc{\myb}[1]{\textcolor{blue}{#1}}
\nc{\mydg}[1]{{\color{dg}{#1}}}
\DeclareMathOperator{\eexists}{\exists}
\DeclareMathOperator{\fforall}{\forall}
\DeclareMathOperator{\fforallstar}{\forall^*}
\nc{\Exists}[1]{\bigl(\eexists #1\bigr)}
\nc{\Forall}[1]{\bigl(\fforall #1\bigr)}
\nc{\Forallstar}[1]{\bigl(\fforallstar #1\bigr)}
\nc{\End}[1]{\bigl(#1\bigr)}
\nc{\dmo}[2]{\DeclareMathOperator{#1}{#2}}
\dmo{\Asc}{Asc}
\nc{\plusmin}{\wedge}
\nc{\cBsub}{{\cB^{\mbox{\tiny $\sub$}}}}
\nc{\Alice}{{\textsc{Alice}{}}}
\nc{\Bob}{{\textsc{Bob}}}
\nc{\BM}{\op{BM}}
\nc{\Palpha}{{\bbS_\alpha}}
\nc{\Pbeta}{{\bbS_\beta}}
\nc{\Pwtwo}{\bbS_{\w_2}}
\nc{\restrict}{\upharpoonright}
\nc{\U}{\mathcal U}
\nc{\zrost}{\bar{\w}^{\uparrow\w}}
\newcommand{\bigvid}{\hat{\ \ }}
\newcommand{\spl}{\Split}
\newcommand{\vid}{\hat{\ }}
\title{Concentrated sets and $\gamma$-sets in the Miller model}
\subjclass[2020]{Primary: 03E35, 54D20. Secondary: 
03E75.}
\keywords{Concentrated set, Miller forcing, Hurewicz space, Rothberger space}
\thanks{The research of the first and the third authors
was funded in whole by the Austrian Science Fund
(FWF) [I 5930].
The research of the second author was funded by the National Science Center, Poland Weave-UNISONO call in the Weave programme
Project: Set-theoretic aspects of topological selections 2021/03/Y/ST1/00122
}
\author[V.~Haberl]{Valentin Haberl}
\address{Institut f\"ur Diskrete Mathematik und Geometrie, Technische Universit\"at Wien, Wiedner Hauptstrasse 8-10/104, 1040 Wien, Austria.}
\email{valentin.haberl.math@gmail.com}
\author[P.~Szewczak]{Piotr Szewczak}
\address{Institute of Mathematics, Faculty of Mathematics and Natural Science,
College of Sciences, Cardinal Stefan Wyszy\'nski University in Warsaw, W\'oycickiego 1/3,
01–938 Warsaw, Poland}
\email{p.szewczak@wp.pl}
\urladdr{http://piotrszewczak.pl}
\author[L.~Zdomskyy]{Lyubomyr Zdomskyy}
\address{Institut f\"ur Diskrete Mathematik und Geometrie, Technische Universit\"at Wien, Wiedner Hauptstrasse 8-10/104, 1040 Wien, Austria.}
\email{lzdomsky@gmail.com}
\urladdr{https://dmg.tuwien.ac.at/zdomskyy/}
\begin{document}

\begin{abstract}
Using combinatorial covering properties, we show that there is no concentrated set
of reals of size $\w_2$ in the Miller model.
The main result refutes a conjecture of Bartoszy\'nski and Halbeisen. We also prove that there are no $\gamma$-set of reals of size $\w_2$ in the Miller model.
\end{abstract} 

\maketitle

\section{Introduction}

We work in the Cantor space $\PN$.
A set $X\sub\PN$ is \emph{concentrated on a set} $D\sub \PN$ if the sets $X\sm U$ are countable for all open sets $U\sub \PN$ containing $D$.
A set $X\sub\PN$ is \emph{concentrated} if it is uncountable and concentrated on some countable set contained in $X$.
One of the remarkable features  of concentrated sets is the following
covering property introduced by  Rothberger~\cite{Rot38}
as a strengthening and a topological counterpart of strong measure zero sets introduced by Borel.
A set 
$X\sub \PN$  is \emph{Rothberger} if for any sequence $\eseq{\cU}$ of open covers of $X$, there are sets $U_0\in\cU_0, U_1\in\cU_1,\dotsc$ such that the family $\sset{U_n}{n\in\w}$ covers $X$. 

Another motivation behind concentrated sets is their connection
with so-called $K$-Lusin sets, which in their turn are 
related to classical works of Banach and Kuratowski 
on the existence of non-vanishing $\sigma$-additive measures 
defined on all subsets of the real line, see, e.g.,
\cite{BarHal03} and references therein.
Let $\roth$ and $\Fin$ be the families of all infinite and finite subsets of $\w$, respectively.
Recall that a set $X\sub\roth$ is \emph{$K$-Lusin} if it is uncountable and
the sets $X\cap K$ are countable for all compact sets $K\sub\roth$.

\bobs\label{obs:concK-Lusin}
There is a concentrated set in $\PN$ of cardinality $\kappa$ if and only if there is a $K$-Lusin set in $\roth$ of cardinality $\kappa$.
\eobs 

\bpf
($\Rightarrow$)
Let $X\sub\PN$ be a concentrated set of cardinality $\kappa$.
Eventually, adding to the set $X$ countably many points, we may assume that the set $X$ contains a countable dense set $D$ in $\PN$ such that $X$ is concentrated on $D$.
The space $\PN$ (homeomorphic with $\Cantor$) is countable dense homogeneous
(see \cite{ArhMil14} and references therein), i.e., 
for any countable dense subsets $D_0,D_1$ of $\PN$
there exists a homeomorphism $h\colon\PN\to\PN$
with $h[D_0]=D_1$. 
It follows that a homeomorphic copy $Y$ of $X$ in $\PN$ is concentrated on $\Fin$.
For any compact set $K\sub\roth$, the open set $\PN\sm K$ contains $\Fin$, and thus the set $Y\cap K$ is countable.
It follows that $Y\sm \Fin$ is a $K$-Lusin set of cardinality $\kappa$.

($\Leftarrow$) 
Let $X\sub\roth$ be a $K$-Lusin set of cardinality $\kappa$.
For any open set $U\sub \PN$ containing $\Fin$, the set $\PN\setminus U$ is compact in
$\PN$, and thus the set $X\sm U=(\PN\setminus U)\cap X$
must be countable.
It follows that the set $X\cup\Fin$ is concentrated on $\Fin$ and it has cardinality $\kappa$.
\epf
 
The main result of this paper is the non-existence of concentrated sets of cardinality $\mathfrak c=\w_2$  in the Miller model, i.e., the model obtained by iterating the poset introduced in 
\cite{Mil84} with countable supports $\w_2$ many steps over a ground model of GCH.

\bthm \label{main}
In the Miller model, there is no $K$-Lusin set in $\roth$
of size $\w_2$. Equivalently, in this model there is no concentrated set
of size $\w_2$. 
\ethm

This theorem
 refutes a conjecture indirectly
stated in the proof of \cite[Proposition 3.2]{BarHal03}.
One of the main ingredients in the proof of Theorem~\ref{main} is the fact \cite{Zdo05} that in the Miller model every Rothberger set has the following covering property of Hurewicz.
A set $X\sub \PN$ is \emph{Hurewicz} if for any sequence $\eseq{\cU}$ of open covers of $X$, there are finite families $\cF_0\sub\cU_0, \cF_1\sub\cU_2,\dotsc$ such that the family $\sset{\Un\cF_n}{n\in\w}$ is a $\gamma$-cover of $X$, i.e., it is infinite and each element $x\in X$ belongs to all but finitely many sets $\Un\cF_n$. 

In the second part of the paper we consider possible sizes of so called $\gamma$-sets in the Miller model.
A cover of a set $X\sub\PN$ is an \emph{$\w$-cover} if the set $X$ is not in the cover and every finite subset of $X$ is contained in some set from the cover.
A set $X\sub\PN$ is a \emph{$\gamma$-set} if every open $\w$-cover of $X$ contains a $\gamma$-cover.
The $\gamma$-property was introduced during investigations of local properties in functions spaces~\cite{GerNag82}.
The existence of uncountable $\gamma$-sets in $\PN$ is independent from ZFC (\cite{Laver}, \cite{galvinMiller}).
In particular, by the result of Tsaban and Orenshtein (\cite[Theorem~3.6]{ot}, \cite{sss}), in the Miller model there is a $\gamma$-set in $\PN$ of size $\w_1$.
We prove the following result.

\bthm\label{thm:gamma}
In the Miller model, there is no $\gamma$-set in $\PN$ of size $\w_2$. 
\ethm

In order to prove Theorem~\ref{thm:gamma} we use the covering property of Menger.
A set $X\sub \PN$ is \emph{Menger} if for any sequence $\eseq{\cU}$ of open covers of $X$, there are finite families $\cF_0\sub \cU_0, \cF_1\sub\cU_1,\dotsc$ such that the family $\Un_{n\in\w}\cF_n$ is a cover of $X$.
In ZFC, we have the following relations between all considered in the paper combinatorial covering properties~\cite{COC2}.
\begin{figure}[H]
\begin{tikzcd}[ampersand replacement=\&,column sep=1cm]
\makebox[\widthof{$\gamma$-property}][c]{\text{Hurewicz}}\arrow{r} \& \makebox[\widthof{Rothberger}][c]{\text{Menger}}\\
\text{$\gamma$-property} \arrow{r}\arrow{u}\& \text{Rothberger}\arrow{u}\\
\end{tikzcd}
\end{figure}
\noindent The proof of Theorem~\ref{thm:gamma} is based on a result proved in \cite{Zdo2018} stating that Menger subsets of $\PN$, in the Miller, have specific structure, described in details in Section~3.

Since we never use cardinal characteristics of the continuum 
directly in our proofs but rather some known combinatorial consequences
of (in)equalities between them, the knowledge of their definitions 
does not affect understanding of this paper. Therefore 
we refer the reader to the survey of Blass~\cite{Bla10}
instead of presenting
the definitions of cardinal characteristics  
we mention.

\section{Proofs}

We identify each infinite subset of $\w$ with the increasing enumeration of its elements, an element of the Baire space $\NN$.
Then $\roth$ is a subset of $\NN$ and topologies in $\roth$ inherited from $\PN$ and $\NN$ coincide.
Depending on context we refer to elements of $\roth$ as sets or functions.
For natural numbers $n,m$ with $n<m$, let $[n,m):=\sset{i}{n\leq i<m}$.
For functions $x,f\in\NN$ let $[f<g]:=\sset{n}{f(n)<g(n)}$ and we use this convention also to another binary relations on $\w$.
Let $h\in\roth$. 
A  function $f\in\roth$ is \emph{$h$-unbounded over a set $M$}, if the sets $\sset{n\in \w}{[h(n),h(n+1))\sub[x<f]}$ are infinite for all functions $x\in \NN\cap M$.
Obviously, if
 $x\leq^* f$ for all $x\in \NN\cap M$, then
 $f$ is $h$-unbounded over $M$ for any $h\in\roth$.

\bdfn
A poset $\bbP$ is \emph{mild} if for every sufficiently large $\theta$ and countable elementary submodel $M$ of $H(\theta)$ with $\bbP\in M$, if a function $f\in\roth$ is $h$-unbounded over $M$ for some $h\in\roth$, then for every $p\in P\cap M$, there is an $(M,\bbP)$-generic condition $q\in P$ with $q\leq p$ such that $q$ forces
$f$ to be $h$-unbounded over $M[\Gamma]$, where $\Gamma$ is the canonical name for a $\bbP$-generic filter.
\edfn

Obviously,  each mild poset is proper and an iteration of finitely many mild posets is again mild.
Next, we shall prove that mild posets are  preserved by countable support iterations.
As could be expected, the proof of the aforementioned preservation
 is similar to a standard proof of the preservation of properness by countable support iterations
 (see, e.g., \cite[Lemma~2.8]{Abr10} for a very detailed presentation of the latter),
 with some additional control
on the sequence $\seq{\name{p}_i}{i\in\w}$.

We follow
 conventions regarding  notation related to iterations  made in \cite{RepZdo22}, these are pretty  standard and very similar 
 to those in \cite{Abr10}.
Let $\seq{\bbP_\alpha,\name{\bbQ}_\alpha}{\alpha<\delta}$
be an iterated forcing construction for some ordinal number $\delta$.
For ordinal numbers $\alpha_0,\alpha_1$ with $\alpha_0\leq\alpha_1\leq\delta$, we denote by $\bbP_{[\alpha_0,\alpha_1)}$ a
$\bbP_{\alpha_0}$-name for the quotient poset $\bbP_{\alpha_1}/\bbP_{\alpha_0}$, viewed naturally
as an iteration over the ordinals $\xi\in\alpha_1\setminus\alpha_0$.
For a $\bbP_{\alpha_0}$-generic filter $G$
and a $\bbP_{\alpha_1}$-name $\tau$, where $\alpha_0\leq \alpha_1\leq\delta$, we denote by
$\tau^G$ the $\bbP_{[\alpha_0,\alpha_1)}^G$-name in $V[G]$ obtained from $\tau$ by partially interpreting it
with $G$. This allows us to speak about, e.g., $\bbP_{[\alpha_1,\alpha_2)}^G$ for
$\alpha_0\leq\alpha_1\leq\alpha_2\leq\delta$  and a $\bbP_{\alpha_0}$-generic filter
$G$. For a poset $\bbP$ we shall denote the standard $\bbP$-name
for  a $\bbP$-generic filter  by $\Gamma_{\bbP}$. We shall write $\Gamma_\alpha$ instead of $\Gamma_{\bbP_\alpha}$
whenever we work with an iterated forcing construction which will be clear from the context.
Also, $\Gamma_{[\alpha_0,\alpha_1)}$ is a $\bbP_{\alpha_1}$-name whose interpretation with respect
to a $\bbP_{\alpha_0}$-generic filter $G$ is  $\Gamma_{\bbP_{[\alpha_0,\alpha_1)}^G}$, which is an element of $ V[G]$.

\blem \label{cs-pres}
If $\seq{\bbP_\alpha,\name{\bbQ}_\alpha}{\alpha<\delta}$ is a countable support iteration of
mild (hence proper) posets for some ordinal number $\delta$, then   $\bbP_\delta$ is also mild.
\elem
\begin{proof}
The proof is by induction on $\delta$.
Since the successor case is clear, assume that $\delta$ is a limit ordinal number.
Let $M$ be a countable elementary submodel of $H(\theta)$, for a sufficiently large $\theta$ with $\bbP_\delta\in M$ and $p\in\bbP_\delta\cap M$.
 Pick an increasing sequence $\seq{\delta_i}{i\in\w}$ cofinal in $\delta\cap M$,
with $\delta_i\in M$ for all $i\in\w$.

Let $\seq{\name{y}_i}{i\in\w}$ be an enumeration of all
$\bbP_\delta$-names for sequences in $\NN$ which are elements of
$M$, where every such a name appears infinitely often.
Let also
$\seq{D_i}{i\in\w}$ be an enumeration of all open dense subsets of
$\bbP_\delta$ which belong to $M$.
Fix functions $f,h\in\NN$ and assume that the function $f$ is $h$-unbounded over $M$.
  By induction on $i\in\w $ we will define a condition
$q_i\in\bbP_{\delta_i}$ and  $\bbP_{\delta_i}$-names $\name{p}_i,
\name{n}_i$  such that
\be[label=(\textit{\roman*})]
\item $\name{p}_i$ is a name for an element of $\bbP_\delta$,  $q_0\forces_{\delta_0}\name{p}_0\leq \check{p}$,
and $q_{i+1}\forces_{\delta_{i+1}}\name{p}_{i+1}\leq\name{p}_i$;
\item $q_{i+1}\restrict\delta_i=q_i$;
\item $q_i$ is $(M,\bbP_{\delta_i})$-generic;
\item $ \name{n}_i$ is a  $\bbP_{\delta_i}$-name for a  natural number bigger than $i$; and\label{itm:four}
\item $q_i \forces_{\delta_i} \End{\name{p}_i\restrict\delta_i\in\Gamma_{\delta_i}}\wedge\End{
\name{p}_i\in D_i\cap M}\wedge\End{\name{p}_i\forces_\delta [h(\name{n}_i), h(\name{n}_i+1) )\sub [\name{y}_i<f]}$.\label{itm:five}
\ee

Suppose now that we have constructed objects as above and set $q:=\bigcup_{i\in\w}q_i$.
We have $q_i=q\restrict\delta_i$ for all $i\in\w$.
Since $q_i\forces_{\delta_i} \name{p}_i\restrict\delta_i\in\Gamma_{\delta_i}$
and $q_{i+1}\forces_{\delta_{i+1}}\name{p}_{i+1}\leq\name{p}_i$ for all $i\in\w$, a standard argument yields that
$q$ is $(M,\bbP_\delta)$-generic and $q\forces_\delta\name{p}_i\in\Gamma_\delta$ for all $i\in\w$, see, e.g.,
the proof of \cite[Lemma~2.8]{Abr10} for details.
Then
\[
q\forces_{\delta}\sset{n\in\w}{[h(n), h(n+1)) \sub [\name{y}<f]}
\text{ is infinite}\]
for any $\bbP_\delta$-name
$\name{y} \in M$ for an element of $\w^\w$: 
For a $\bbP_\delta$-name $\name{a}$, let $a:=\name{a}^G$.
Fix a $\bbP_\delta$-generic filter $G$ with $q\in G$ and $i\in\w$ with $\name{y}=\name{y_i}$.
Then the condition $p_i$ lies in $G$ and by~\ref{itm:five}, and therefore we have
\[
[h({n_i}), h(n_i+1)) \sub [y<f].
\]
By~\ref{itm:four} and the fact that $\name{y}$ appears infinitely often in the sequence $\seq{\name{y_i}}{i\in\w}$, we have infinitely many $n_i\in\w$ with the above property.

Returning now to the inductive construction, fix $i\in\w$ and assume that
$q_i\in\bbP_{\delta_i}$ and  $\bbP_{\delta_i}$-names $\name{p}_i, \name{n}_i$ satisfying relevant instances of $(i)$--$(v)$ have already been constructed.
For the remaining part of the proof, let $G_{\delta_i}$ be  a $\bbP_{\delta_i}$-generic filter containing $q_i$ and for a $\bbP_{\delta_i}$-name $\name{a}$ let $a:=\name{a}^{G_{\delta_i}}$.
We have $p_i\in \bbP_\delta\cap M$.
By~\ref{itm:five}, we know that $p_i\restrict\delta_i\in G_{\delta_i}$.
In $V[G_{\delta_i}]$, let $p_i'\in M\cap D_{i+1}$ be a condition such that $p_i'\leq p_i$ and $p_i'\restrict\delta_i\in G_{\delta_i}$.
By the maximality principle, we get a $\bbP_{\delta_i}$-name $\name{p}'_i$
for a condition in $\bbP_\delta$ such that 
\[
q_i \forces_{\delta_i}\End{\name{p}'_i\leq \name{p}_i}\wedge\End{\name{p}'_i\in M\cap D_{i+1}}\wedge\End{\name{p}'_i\restrict\delta_i\in\Gamma_{\delta_i}}.
\]

Given a  $\bbP_{\delta_{i+1}}$-generic filter $R$,  construct in $V[R]$
a decreasing sequence  $\seq{r_m}{m\in\w}\in M[R]$ of  conditions in
$\bbP^R_{[\delta_{i+1},\delta)}$
below $(\name{p}'_i\restrict [\delta_{i+1},\delta))^R$
such that for some $s_m\in \w^m$ we have 
\[
r_m\forces_{\bbP^R_{[\delta_{i+1},\delta)}} \name{y}_{i+1}\restrict m=s_m.
\]
By the maximality principle, we get a sequence
$\seq{\rho_m}{m\in\w}\in M$ of $\bbP_{\delta_{i+1}}$-names for elements of
$\bbP_{[\delta_{i+1},\delta)}$ such that
\begin{equation*}\forces_{\delta_{i+1}} \End{\rho_{m+1}\leq\rho_m}\wedge \Exists{\nu_m\in \w^m}\End{\rho_m \forces_{\bbP_{[\delta_{i+1},\delta)}} 
  \nu_m\mbox =\name{y}_{i+1}\restrict m}.
\end{equation*}
In the notation used above, let
$\name{z}\in M$ be a $\bbP_{\delta_{i+1}}$-name
for $\Un_{m\in\w}\nu_m$, an element of $\w^\w$.

We have $p'_i\in \bbP_\delta\cap M\cap D_{i+1}$.
It also follows from the above that $p'_i\restrict\delta_i\in G_{\delta_i}$. For a while we shall be working in $V[G_{\delta_i}]$.
 Since
 by our inductive assumption the poset $\bbP^{G_{\delta_i}}_{[\delta_i,\delta_{i+1})}$ is mild in $V[G_{\delta_i}]$,
there exists an $(M[G_{\delta_i}], \bbP^{G_{\delta_i}}_{[\delta_i,\delta_{i+1})})$-generic condition $\pi\leq (p_i'\restrict[\delta_i,\delta_{i+1}))^{G_{\delta_i}}$
such that
$$ \pi\forces_{\bbP^{G_{\delta_i}}_{[\delta_i,\delta_{i+1})}}
 \tau:=\{n\in  \w: [h(n), h(n+1)) \sub [\name{z}^{G_{\delta_i}}<f]\}$$
 is infinite.
Let $H$ be a $\bbP^{G_{\delta_i}}_{[\delta_i,\delta_{i+1})}$-generic filter over $V[G_{\delta_i}]$ containing $\pi$,
and fix $n_{i+1}\in\tau^H\setminus (i+2)$. Thus in $V[G_{\delta_i}*H]$ we have
$$\rho_{h(n_{i+1}+1)}^{G_{\delta_i}*H} \forces_{\bbP_{[\delta_{i+1},\delta)}^{G_{\delta_i}*H}} 
[h(n_{i+1}), h(n_{i+1}+1))\sub [ \name{y}_{i+1}^{G_{\delta_i}*H}<f].$$
In $M[G_{\delta_i}]$ pick a condition $s\in M[G_{\delta_i}]\cap H$ below $(p_i'\restrict[\delta_i,\delta_{i+1}))^{G_{\delta_i}}$,
forcing the above properties of $n_{i+1}$, $\tau$, and $\rho_{h(n_{i+1}+1)}$.
By the maximality principle  we obtain $\bbP_{[\delta_i,\delta_{i+1})}^{G_{\delta_i}}$-names
$\name{s}$ and $\rho$ in $M[G_{\delta_i}]$ for some elements of $\bbP_{[\delta_i,\delta_{i+1})}^{G_{\delta_i}}$
and $\bbP_{[\delta_{i+1},\delta)}^{G_{\delta_i}}$, respectively, and a name $\name{n}_{i+1}$ for a natural number
such that

\begin{multline}\label{long}
 \pi \forces_{{\bbP_{[\delta_i,\delta_{i+1})}^{G_{\delta_i}}}} \End{\name{s}\in M[G_{\delta_i}]\cap \Gamma^{G_{\delta_i}}_{[\delta_i,\delta_{i+1})}}
\wedge
\End{\name{s}\leq
(\name{p}'\restrict [\delta_i,\delta_{i+1})^{G_{\delta_i}}}
\wedge\\
\wedge\End{\name{s}\forces_{{\bbP_{[\delta_i,\delta_{i+1})}^{G_{\delta_i}}}}
 \rho\leq (\name{p}'\restrict [\delta_{i+1},\delta))^{G_{\delta_i}}}
\wedge
\rho
\forces_{{\bbP_{[\delta_{i+1},\delta)}^{G_{\delta_i}}}}
 [h(\name{n}_{i+1}),h(\name{n}_{i+1}+1)) \sub [\name{y}_{i+1}< f].
\end{multline}

Using the maximality principle again, we can find
$\bbP_{\delta_i}$-names for the objects appearing in
equation~(\ref{long}) such that $q_i$ forces this equation. We shall
use the same notation for these names. It remains to set
$q_{i+1}:=q_i\bigvid \pi$ and
$\name{p}_{i+1}:=\name{p}'_i\restrict\delta_i\bigvid\name{s}\bigvid\rho$
and note that they together with the name $\name{n}_{i+1}$ satisfy $(i)$--$(v)$ for $i+1$.
\end{proof}

By a \emph{Miller tree} we understand a subtree $T$ of $\w^{<\w}$
consisting of increasing  finite sequences such that the following
conditions are satisfied:
\begin{itemize}
\item
 Every $t \in T$ has an extension $s\in T$ which
is splitting in $T$, i.e., there are more than one immediate
successors of $s$ in $T$;
\item If $s$ is splitting in $T$, then it
has infinitely many immediate successors in $T$.
\end{itemize}
 The \emph{Miller forcing} is the
collection $\mathbb M$ of all Miller trees ordered by inclusion,
i.e., smaller trees carry more information about the generic. This
poset was introduced in \cite{Mil84}.
 For a Miller tree $T$ we
shall denote  the set of all splitting nodes of $T$ by $\spl(T)$.
The set 
$\spl(T)$ may be written in the form $\bigcup_{i\in\w}\spl_i(T)$,
where
\[
\spl_i(T):=\{t\in\spl(T)\: :\: |\{s\in\spl(T):s\subsetneq t\}|=i\}.
\]
If $T_0, T_1\in\mathbb M$, then $T_1\leq_i T_0$ means that $T_1\leq T_0$
and $\spl_i(T_1)=\spl_i(T_0)$.
It is easy to check  that for any
sequence $\seq{T_i}{i\in\w}\in\mathbb M^\w$, if $T_{i+1}\leq_i T_i$
for all $i\in\w$, then $\bigcap_{i\in\w}T_i\in\mathbb M$.

For a node $t$ in a Miller tree $T$ we denote by $T_t$ the set
$\{s\in T : s$ is compatible with $t\}$, which is also a Miller tree.

\blem \label{mild-miller}
The Miller forcing $\mathbb M$ is mild.
\elem
\begin{proof}
Let $N$ be a countable elementary submodel of $H(\theta)$ for a sufficiently large $\theta$ with $\bbM\in N$ and $T\in\mathbb
M\cap N$.
Let $\seq{\name{y}_i}{i\in\w}$ be an enumeration of all
$\mathbb M$-names for infinite subsets of $\w$ which are elements of
$N$, in which every such a name appears infinitely often. Let also
$\seq{D_i}{i\in\w}$ be an enumeration of all open dense subsets of
$\mathbb M$ which belong to $N$.
Suppose that a function $f\in\roth$
is $h$-unbounded over $N$ for some $h\in\roth$.
We shall inductively construct a sequence $\seq{
T_i}{i\in\w}$ such that $T_{i+1}\leq_i T_i$ and
$T_\infty:=\bigcap_{i\in\w}T_i$ is as required.
Set $T_0:=T$ and
suppose that $T_i$ has already been constructed. Moreover, we shall
assume that $(T_i)_t\in N$ for all $t\in\spl_i(T_i)$.
 Let $\seq{t_j}{j\in\w}$ be a bijective enumeration of
$\spl_i(T_i)$.
For every $j,k\in\w$ with $t_j\bigvid k\in
T_i$ fix a decreasing sequence $\seq{S^{i,j,k}_{n}}{n\in\w}\in N$ of
elements of $D_i$ below $(T_i)_{t_j\vid k}$ such that each condition
$S^{i,j,k}_n$ decides some $a^{i,j,k}_n\in \w^n$ to be $\name{y}_i\restrict n$. Thus
$y^{i,j,k}:=\bigcup_{n\in\w}a^{i,j,k}_n\in \w^\w\cap N$, and hence
there is a natural number $ m^{i,j,k}\geq i$ such that
$$ [ h(m^{i,j,k}), h(m^{i,j,k}+1 )) \sub [y^{i,j,k}<f]. $$
Set
\[
T_{i+1}:=\bigcup\{ S^{i,j,k}_{h(m^{i,j,k}+1 )}:j\in\w, t_j\bigvid k\in T_i\}.
\]
This completes our inductive construction of the fusion sequence
$\seq{T_i}{i\in\w}$.

We claim that $T_\infty$ is as required. First
of all, the condition $T_\infty $ is $(N,\mathbb M)$-generic  because the
collection $\sset{ S^{i,j,k}_{h(m^{i,j,k}+1 )}}{j\in\w, t_j\bigvid k\in
T_i}$ is a subset of $D_i\cap N$ and predense below $T_{i+1}$ (and hence
also below $T_\infty$). Now fix an $\mathbb M$-name $\name{y}\in N$ for an
element of $\w^\w$ and suppose to the contrary, that there exist
$i\in\w$ and $R\leq T_\infty $  that forces
$[h(m),h(m+1)) \not\sub [\name{y}<f]$ for all $m\geq i$.
Enlarging $i$, if necessary, we may assume that
$\name{y}=\name{y}_i$.
Replacing $R$ with  a stronger condition, if
necessary, we may assume that $R\leq (T_i)_{t_j\vid k}$, where $t_j\vid k\in T_i$ for some
$i,j,k\in\w$.
But then $R\leq
S^{i,j,k}_{h(m^{i,j,k}+1 )}$, and the latter condition forces
$$ [ h(m^{i,j,k}), h(m^{i,j,k}+1 )) \sub [\name{y}_i<f],$$
which leads to a contradiction since $m^{i,j,k}$ has been
chosen to be above $i$.
\end{proof}

\bcor \label{imp_nice_conc}
Let   $G_{\w_2}$ be an
$\mathbb M_{\w_2}$-generic filter over $V$,  $x\in (\roth)^{V[G_{\w_2}]}$, and 
$\psi:(\roth)^V\to(\roth)^V$  be a function which is an element of $V$.
Then there exists an element $f\in\roth\cap V$ such that the set
\[
\sset{n\in\w}{[\psi(f)(n),\psi(f)(n+1))\sub [x< f]}
\]
is infinite.
\ecor
\begin{proof}
Suppose, contrary to our claim, that there exists a condition  $p\in G_{\w_2}$ 
and an $\bbM_{\w_2}$-name $\name{x}$ for an element  $x\in \w^\w\cap V[G_{\w_2}]$
such that
\begin{equation}\label{imp_eq}
p\forces\Forall{f\in\roth\cap V}\Forallstar{n\in\w} 
\End{\big[\psi(f)(n),\psi(f)(n+1)\big)\cap[\name{x}\geq f] \neq\emptyset}.
\end{equation}
We work in $V$ in what follows.
Let $N$ be a countable elementary submodel of $H(\w_3)$ with $p,\name{x}\in N$.
Fix $f\in \roth$
 such that $z\leq^*f$ for all $z\in \roth\cap N$.
 Then $f$ is $\psi(f)$-unbounded over $N$, and hence Lemmata~\ref{mild-miller} and \ref{cs-pres} imply that there exists 
 an $(N,\mathbb M_{\w_2})$-generic condition $q\leq p$
 which forces that $f$ is $\psi(f)$-unbounded over $N[\Gamma_{\mathbb M_{
 \w_2}}]$.
In particular, the condition $q$ forces that the set 
\[
\sset{n\in\w}{[\psi(f)(n),\psi(f)(n+1))\sub [\name{x}< f]}
\]
is infinite, which contradicts (\ref{imp_eq}).
 \end{proof}

\noindent\textbf{Remark.}
As it was established in \cite{BlaShe89},  in the Miller model there exists an ultrafilter $\mathcal F$
having a base in $V$, i.e., there exists a family  $\{F_\alpha: \alpha\in\w_1\}\in V$ such that 
\[
\mathcal F=\{X\sub\w:\Exists{\alpha\in\w_1}(F_\alpha\sub X)\}
\]
is an ultrafilter in $V[G_{\w_2}]$. 
By \cite[Theorem 3.1]{BlaMil99} there exists a function $x\in\roth$
such that $[f<x]\in\mathcal F$ for all functions $f\in\roth\cap V$.
Thus, for every function
$f\in\roth\cap V$ there exists an ordinal number $\alpha(f)<\w_1$ such that
$F_{\alpha(f)}\sub [f<x]$.
Let $\psi_{\mathcal F}(f)\in\roth\cap V$ be a function such that
$[\psi_{\mathcal F}(f)(n),\psi_{\mathcal F}(f)(n+1))\cap F_{\alpha(f)}\neq\emptyset$
for all $n\in\w$.  Thus $\psi_{\mathcal F}$ does not satisfy the conclusion 
of Corollary~\ref{imp_nice_conc}, and hence is an example showing that the condition $\psi\in V$ in the above-mentioned corollary cannot be omitted.
\hfill $\Box$
\medskip

We are in a position now to present the proof of the main result.
\medskip

\noindent\textit{Proof of Theorem~\ref{main}.}
Assume that there is a $K$-Lusin set $X\sub\roth$. 
For each subset $X'$ of $X$, the set $X'\cup \Fin\sub\PN$ is Hurewicz:
If $X'$ is countable, then the assertion is true.
Assume that $X'$ is uncountable.
Then $X'$ is a $K$-Lusin set and as in the proof of Observation~\ref{obs:concK-Lusin}, we have that $X'\cup \Fin$ is concentrated.
Each concentrated set is obviously Rothberger and in the Miller model each Rothberger set is Hurewicz. Indeed, each
Rothberger set is Hurewicz under $\mathfrak u<\mathfrak g$
by \cite[Theorem~5]{Zdo05} combined with \cite[Theorem 15]{Sch96}.
The fact that $\mathfrak u<\mathfrak g$ holds in the Miller model
is a direct consequence of
 \cite[Theorem 2]{BlaLaf89} combined with the
results of \cite{BlaShe89}.

Work in $V[G_{\w_2}]$.
Fix a function $f\in \roth$ and define $X_f:=\sset{x\in X}{x\not\les f}$.
Then the set $\sset{x\in \roth}{x\les f}$ is $\sigma$-compact and hence the set 
\[
X \sm X_f = \sset{x \in X}{x \les f} = X \cap  \sset{x\in \roth}{x\les f}
\]
is countable.
Each finite subset of $\w$ we identify with the increasing enumeration of its elements, an element of $\w^{<\w}$.  
Let $\phi_f\colon X_f\cup \Fin\to \roth$ be a function such that
\[\phi_f(y):=
\begin{cases}
[y\geq f],&\text{ if }y\in X_f,\\
\sset{n<|y|}{y(n)\geq f(n)}\cup\{|y|,|y|+1,\dotsc\},&\text{ if }y\in\Fin 
\end{cases}
\]
for all elements $y\in X_f\cup \Fin$.
Since $X_f\sub\sset{x\in X}{[x\geq f]\text{ is infinite}}$, the map $\phi_f$ is well defined.
Moreover, the map $\phi_f$ is continuous.

By the above, the set $X_f\cup \Fin$ is Hurewicz, and thus, the continuous image $\phi_f[X_f\cup \Fin]\sub [\w]^\w$ is Hurewicz as well. Any Hurewicz subspace of $[\w]^\w$
is bounded~\cite{Hur27,COC2}. 
It is well-known and easy to check that for every bounded  set
$B\sub [\w]^\w$  there exists an increasing function 
$s\in\roth$ such that
$b\cap [s(n),s(n+1))\neq\emptyset$
for every $b\in B$ and all but finitely many $n\in\w$~\cite[Lemma~2.13]{Tsa11}.  
Thus, there exists a function $\psi \colon \roth \to \roth$ such that 
for each element $y\in X_f\cup \Fin$, we have $[\psi(f)(n),\psi(f)(n+1))\cap \phi_f(y)\neq\emptyset$ for all but finitely many $n\in\w$.

By a standard argument~\cite[the proof of Lemma 5.10]{BlaShe87},
there exists  an ordinal number $\alpha<\w_2$ such that
\begin{itemize}
\item $\psi [\roth\cap V[G_\alpha]]\sub V[G_\alpha]$;
\item $\psi\restrict (\roth\cap V[G_\alpha])\in V[G_\alpha]$; 
\item 
$(X \setminus X_f)\cap V[G_\alpha] \in V[G_\alpha]$ for all $f \in V[G_\alpha]$; and
\item the map
$V[G_\alpha]\ni f\mapsto X \setminus X_f $ belongs to $V[G_\alpha]$.
\end{itemize}
Without loss of generality, we may assume that $V[G_\alpha]=V$.
Since $|X|=\w_2$ and $\card{\roth \cap V}\leq \w_1$, there is an element $x\in X\sm V$ such that $x$ is not an element of any compact set coded in $V$. 
Let $f\in\roth\cap V$ be a function from Corollary~\ref{imp_nice_conc}, applied to  $\psi \restriction (\roth \cap V)\in V$ and  $x\in X$.
Then the set 
\[
\sset{n\in\w}{[\psi(f)(n),\psi(f)(n+1))\sub [x< f]}
\]
is infinite.
Since $x$ is not an element of any compact set coded in $V$, we have
$x\in X_f$.
By the definition of $\psi$, we have
\[
[\psi(f)(n),\psi(f)(n+1))\cap [x\geq f]=[\psi(f)(n),\psi(f)(n+1))\cap \phi_f(x)\neq\emptyset
\] for all but finitely many $n\in\w$, a  contradiction
\hfill $\Box$
\medskip

\section{$\gamma$-sets}

We need the following notions and auxiliary results. 
For sets $a,b$, we write $a\as b$ if the set $a\sm b$ is finite.
A nonempty set $S\sub\roth$ is a \emph{semifilter} if for every sets $s\in S$ and $b\in\roth$ with $s\as b$, we have $b\in S$.
Let $a\in \roth$.
For a set $x\in\PN$ let 
\[
x/a:=\sset{n\in\w}{x\cap [a(n),a(n+1))\neq\emptyset}.
\]
For a set $X\sub\PN$, define $X/a:=\sset{x/a}{x\in X}$.
If $S$ is a semifilter, filter or an ultrafilter, then also $S/a$ is a semifilter, filter or an ultrafilter, respectively.

A \emph{semifilter trichotomy} is the statement that for every semifilter $S$ there is a function $a\in \roth$ such that the set $S/a$ is the Fr\'{e}chet filter of all cofinite sets in $\roth$, an ultrafilter or the full semifilter $\roth$.
The semifilter trichotomy is equivalent to the inequality $\fu< \fg$~(\cite[Theorems~7~and~10]{La92}, \cite[p.~11]{Blass90}), and thus it holds in the Miller model. 

The second alternative of this trichotomy implies that for any 
two semifilters $S_0,S_1$ for which there are $a_0,a_1\in\roth$ such that 
$S_0/a_0, S_1/a_1$ are ultrafiltes, there exists $a\in\roth $ such that
$S_0/a=S_1/a$ is an ultrafilter, see the proof of \cite[Theorem~9.22]{Bla10}.

\begin{lem} \label{gamma set lemma 1}
In the Miller model, if $X\sub\roth$ and $X\cup\Fin$ is a $\gamma$-set, then $X$ is bounded by $\w_1$-many functions in  $\roth$. 
\end{lem}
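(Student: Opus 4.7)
The plan is to exploit the interaction between the $\gamma$-property of $X\cup\Fin$ and two structural features of the Miller model used throughout the paper: the semifilter trichotomy (equivalent to $\fu<\fg$, valid in Miller) and the Blass--Shelah fact that some ultrafilter $\cF$ in $V[G_{\w_2}]$ admits a base of size $\w_1$ lying in $V$, cf.\ the Remark following Corollary~\ref{imp_nice_conc}. We may assume $X$ is uncountable, otherwise the claim is trivial.

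First I would use that every $\gamma$-set is Hurewicz, so for every continuous $\phi\colon\PN\to\roth$, the image $\phi[X\cup\Fin]$ is a Hurewicz subspace of $\roth$ and is therefore bounded: there is $g_\phi\in\roth$ with $\phi(y)\les g_\phi$ for every $y\in X\cup\Fin$. In particular, for each $a\in\roth$ the map $y\mapsto y/a$ is continuous from $\PN$ to $\PN$, and its Hurewicz-bounded image records how elements of $X$ distribute across the partition $\{[a(n),a(n+1)):n\in\w\}$ of $\w$.

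Next I would apply the semifilter trichotomy to the $\as$-upward closure $S\sub\roth$ of $X$. For some $a\in\roth$, the quotient $S/a$ is one of: the Fr\'echet filter, an ultrafilter, or the full semifilter $\roth$. In the full-semifilter case, arbitrary sparsity patterns against the $a$-partition are realized by elements of $X$, which I expect to contradict the Hurewicz boundedness obtained above for the image of $y\mapsto y/a$. In the Fr\'echet case, cofinitely many intervals $[a(n),a(n+1))$ are hit by (eventually) every $x\in X$; combined with the bound $g_\phi$ for $y\mapsto y/a$, this gives an honest $\les$-bound on $X$ by a single function built from $a$ and $g_\phi$. The ultrafilter case is the delicate one: fix a Blass--Shelah base $\{F_\alpha:\alpha<\w_1\}\sub V$ of an ultrafilter $\cF$ in $V[G_{\w_2}]$, and, using the coalescence principle quoted in the paper (two semifilters whose quotients are ultrafilters share a common $a'$ making both quotients equal), arrange $S/a'=\cF/a'$. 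For each $\alpha<\w_1$ the continuous map $y\mapsto y\cap F_\alpha$ produces $g_\alpha\in\roth$ dominating the enumerations of $x\cap F_\alpha$ for all $x\in X\cup\Fin$; since each $x\in X$ satisfies $F_\alpha\as x$ for some $\alpha$ by ultrafilter-ness, the family $\{g_\alpha:\alpha<\w_1\}$ $\les$-bounds $X$ itself.

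The main obstacle is the ultrafilter case: one must verify both that the enumeration of $x\cap F_\alpha$ controls the enumeration of $x$ (at least up to $\les$ on the slice $\{x : F_\alpha\as x\}$), and that the alignment $S/a'=\cF/a'$ interacts correctly with the bounding functions supplied by Hurewicz. The former is a direct combinatorial observation about almost-subsets; the latter invokes the alignment lemma recalled just before Lemma~\ref{gamma set lemma 1}, together with the identification of $\cF$ with a ground-model-based ultrafilter, which is exactly the ingredient supplied by Blass--Shelah.
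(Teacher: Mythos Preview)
Your trichotomy strategy matches the paper's, and the Fr\'echet case is essentially right. The full-semifilter case, however, has a genuine gap: the Hurewicz property alone cannot exclude it. The map $y\mapsto y/a$ goes from $\PN$ to $\PN$, not to $\roth$, so your ``Hurewicz image in $\roth$ is bounded'' template does not apply to $(X\cup\Fin)/a=X/a\cup\Fin$; and there is no workaround, since for $X=\roth$ the set $X\cup\Fin=\PN$ is compact (hence Hurewicz) while the semifilter generated by $X$ is all of $\roth$. Thus the $\gamma$-property, not merely Hurewicz, must do the work here. The paper argues as follows: $X/a\cup\Fin$ is a $\gamma$-set (continuous image), hence so is its intersection $X'$ with the $F_\sigma$ summable ideal $I$. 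The family $\{O_n:n\in\w\}$ with $O_n=\{z\in\PN:n\notin z\}$ is an $\w$-cover of $X'$, so some $\{O_n:n\in y\}$, $y\in\roth$, is a $\gamma$-cover of $X'$. Pick $b\in I\cap\roth$ with $b\sub y$; since $S/a=\roth$ there is $x\in X$ with $x/a\as b\sub y$, so $x/a\in X'$ yet $x/a\notin O_n$ for infinitely many $n\in y$, a contradiction.

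Your ultrafilter case also needs repair. From $S/a'=\cF/a'$ you only obtain, for each $x\in X$, some $\alpha$ with $F_\alpha/a'\sub x/a'$, i.e., $x$ meets every $a'$-interval that $F_\alpha$ meets; this does \emph{not} give $F_\alpha\as x$, so bounding the enumeration of $x\cap F_\alpha$ tells you nothing about the enumeration of $x$. (The map $y\mapsto y\cap F_\alpha$ also fails to land in $\roth$, so the Hurewicz bound $g_\alpha$ is not available as stated.) The paper avoids Hurewicz here entirely and does not need the ground-model Blass--Shelah base, only $\fu=\w_1$: fix any ultrafilter $U$ with base $\{u_\alpha:\alpha<\w_1\}$, choose $a$ with $S/a=U/a$, and set $a_\alpha:=\{a(n+1):n\in u_\alpha/a\}$. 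For $s\in S$ there is $\alpha$ with $u_\alpha/a\sub s/a$, and then a direct count shows $s\les a_\alpha$; hence $\{a_\alpha:\alpha<\w_1\}$ bounds $S\supseteq X$.
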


\begin{proof}
Let us consider the semifilter $S$ generated by $X$, i.e., 
\[
S= \sset{s\in\roth}{\Exists{x\in X}\End{x\as s}}.
\]
Fix a function $a\in\roth$.
By the semifilter trichotomy, the set $S/a$ is the Fr\'{e}chet filter, an ultrafilter or the full semifilter $\roth$. 

Assume that $S/a$ is the Fr\'{e}chet filter.
For each element $s\in S$, we have $s\cap [a(n),a(n+1))\neq\emptyset$ for all but finitely many $n\in\w$.
Thus, the set $S$ is bounded~\cite[Lemma~2.13]{Tsa11}.

Assume that $S/b$ is an ultrafilter for some $b\in\roth$.
In the Miller model, we have $\w_1=\fu<\fd=\w_2$.
Let $\sset{u_\alpha}{\alpha < \w_1}\sub\roth$ be a basis for an ultrafilter $U$.
By the remark made before Lemma~\ref{gamma set lemma 1}, there exists 
$a\in\roth$ such that $S/a=U/a$.
For each ordinal number $\alpha<\w_1$, let $a_\alpha:=\sset{a(n+1)}{n\in u_\alpha/a}$.
Let $s\in S$.
Since $S/a=U/a$, there is an ordinal number $\alpha<\w_1$ such that $u_\alpha/a\sub s/a$.
We have $s\les a_\alpha$.
Consequently, each function from $X$ is dominated by a function from $\sset{a_\alpha}{\alpha<\w_1}$.

Now we prove that $S/a=\roth$ doesn't hold.
Suppose contrary.
The semifilter $S$ is generated by $X$.
Then for each set $y \in \roth$ there is a set $x \in X$ such that $x/a\as y$.
We have $X/a\cup\Fin = (X\cup\Fin)/a$, and thus the set $X/a\cup \Fin$ is a continuous image of $X\cup\Fin$ and it is a $\gamma$-set.
Let $I$ be the summable ideal, i.e., the set 
\[
I:= \sset{b \in\PN}{\sum_{n \in b} \frac{1}{n} < \infty}.
\] 
Since 
\[
I=\Un_{N\in\w}\sset{b \in\PN}{\sum_{n \in b} \frac{1}{n} \leq N},
\]
the ideal $I$ is an $F_\sigma$-set in $\PN$.
Then the set $X':= (X/a\cup\Fin) \cap I$ is a $\gamma$-set~\cite[Theorem~3]{galvinMiller}.

For each natural number $n$, let $O_n:=\sset{z \in\PN}{n\notin z}$.
The family $\cO:=\sset{O_n}{n\in\w}$ is an $\w$-cover of $X'$:
Fix a finite set $F\sub X'$.
Since $\Un F \in I$, we have $\Un F\neq \w$ and there is a natural number $n$ such that $n\notin \Un F$.
Thus, $F\sub O_n$.
Since $X'$ is a $\gamma$-set, there is a set $y\in\roth$ such that the family $\sset{O_n}{n\in y}$ is a $\gamma$-cover of $X'$.
Take a set $b \in I\cap\roth$ such that $b \sub y$.
Since the semifilter $S$ is generated by $X$, there is an element $x\in X$ such that $x/a\as b$.
We have $x/a \as y$, and thus $x/a \in X'$. 
The set $x/a\cap y$ is infinite and for each natural number $n\in x/a\cap y$, we have $x/a\notin O_n$.
Consequently, the element $x/a\in X'$ doesn't belong to infinitely many sets from $\sset{O_n}{n\in y}$, a contradiction.
\end{proof}

Let $X\sub \PN$.
A set $Q\sub X$ is a \emph{$G_{\w_1}$-set} in $X$ if it is an intersection of $\w_1$-many open subsets of $X$.

\begin{lem} \label{gamma set lemma 2}
In the Miller model, each countable subset $Q$ of a $\gamma$-set $X \sub \PN$ is a $G_{\w_1}$-set in $X$.
\end{lem}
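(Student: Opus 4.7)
The plan is to reduce the construction to a boundedness statement that follows from Lemma~\ref{gamma set lemma 1}. Fix an enumeration $Q = \sset{q_n}{n\in\w}$ and view $X \sub 2^\w$. For $x \in X\setminus Q$ define $h_x \in \w^\w$ by $h_x(n) := \min\sset{k}{x(k) \neq q_n(k)}$, and for $f \in \w^\w$ set $W_f := X \cap \Un_n [q_n \restrict f(n)]$, where $[s]$ denotes the basic clopen set of $2^\w$ determined by $s \in 2^{<\w}$. Each $W_f$ is open in $X$ and contains $Q$, and for $x \in X\setminus Q$ one checks that $x \notin W_f$ exactly when $f(n) > h_x(n)$ holds for every $n$. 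The task therefore reduces to producing a family $\mathcal F \sub \w^\w$ of cardinality $\w_1$ such that every $h_x$ ($x \in X \setminus Q$) is pointwise dominated by some element of $\mathcal F$: then $\bigcap_{f \in \mathcal F} W_f = Q$ inside $X$, exhibiting $Q$ as a $G_{\w_1}$-set.

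The family $\mathcal F$ will be built in three steps. First, $X \setminus Q$ is itself a $\gamma$-set. Since $Q$ is $F_\sigma$ in $X$, we have $X\setminus Q = \bigcap_n V_n$ with $V_n \sub X$ open and decreasing; given an open $\w$-cover $\mathcal U$ of $X\setminus Q$, extend each $U \in \mathcal U$ to an open $\tilde U \sub X$ with $\tilde U \cap (X\setminus Q) = U$ and apply the $\gamma$-property of $X$ to $\sset{\tilde U \cup (X\setminus V_n)}{U \in \mathcal U,\, n \in \w}$ (one checks this is an open $\w$-cover of $X$ not containing $X$, using the decreasingness of the $V_n$ and that $U \neq X\setminus Q$ for $U \in \mathcal U$), then restrict the resulting $\gamma$-subcover to $X\setminus Q$. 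Second, the map $h \colon X\setminus Q \to \w^\w$ is continuous since each value $h_x(n)$ is determined by a finite initial segment of $x$, and the injection $g \mapsto \tilde g$, where $\tilde g(n) := \sum_{i\leq n}(g(i)+1)$, continuously embeds $\w^\w$ into $\roth$; so $\sset{\tilde h_x}{x \in X\setminus Q}$ is a $\gamma$-subset of $\roth$. Third, a selection-principle argument---enumerate $\Fin = \sset{d_n}{n\in\w}$ and for each $n$ restrict any given open $\w$-cover of $\sset{\tilde h_x}{x \in X\setminus Q} \cup \Fin$ to the subcover consisting of those members containing $\{d_0,\dots,d_n\}$, then apply the $\gamma$-property of $\sset{\tilde h_x}{x\in X\setminus Q}$---shows that the union remains a $\gamma$-set, so the hypothesis of Lemma~\ref{gamma set lemma 1} is met.

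Applying that lemma produces $\sset{a_\alpha}{\alpha < \w_1} \sub \roth$ such that every $\tilde h_x$ satisfies $\tilde h_x \les a_\alpha$ for some $\alpha$; since $\tilde h_x(n) \geq h_x(n)+1$ by construction, this gives $h_x(n) < a_\alpha(n)$ for all but finitely many $n$. To upgrade from eventual to genuine pointwise domination (which is what the form of $W_f$ demands), enlarge to $\mathcal F := \sset{f_{\alpha,N,B}}{\alpha < \w_1,\, N,B \in \w}$, where $f_{\alpha,N,B}(n) := \max(a_\alpha(n), B+1)$ for $n < N$ and $f_{\alpha,N,B}(n) := a_\alpha(n)$ for $n \geq N$; this family still has cardinality $\w_1$. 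For each $x \in X\setminus Q$, pick $\alpha$ with $h_x(n) < a_\alpha(n)$ eventually, let $N$ be past the exceptional indices, and set $B := \max\sset{h_x(n)}{n < N}$; the resulting $f_{\alpha,N,B}$ dominates $h_x$ pointwise, so $x \notin W_{f_{\alpha,N,B}}$. Consequently $\bigcap_{f \in \mathcal F} W_f = Q$ in $X$.

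The main obstacle I anticipate is the chain of preservation properties of $\gamma$-sets in the first two steps---that a $G_\delta$-subset of a $\gamma$-set is again a $\gamma$-set, and that adjoining a countable set such as $\Fin$ preserves the $\gamma$-property---since Lemma~\ref{gamma set lemma 1} is applicable to the encoded image $\sset{\tilde h_x}{x\in X\setminus Q} \cup \Fin$ only after both are verified. Once those are assembled, the eventual-to-pointwise padding via the finite parameters $(N,B)$ and the translation back to open neighborhoods of $Q$ are routine.
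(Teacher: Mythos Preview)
Your argument is correct and genuinely different from the paper's. The paper first enlarges $X$ by a countable dense set, then uses the countable dense homogeneity of $\PN$ to reduce outright to the case $Q=\Fin$; at that point Lemma~\ref{gamma set lemma 1} applies directly to $X\setminus\Fin$ (since $(X\setminus\Fin)\cup\Fin=X$ is already the given $\gamma$-set), and the bounds $a_\alpha$ immediately give an explicit $G_{\w_1}$ description of $\Fin$ inside $X$. Your route avoids the homogeneity trick entirely: you encode the distance from each $x\in X\setminus Q$ to the enumerated points of $Q$ by the continuous map $x\mapsto h_x$, push this into $\roth$, adjoin $\Fin$, and only then invoke Lemma~\ref{gamma set lemma 1}. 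The price is that you must verify a chain of preservation facts (the $G_\delta$ subset $X\setminus Q$ is a $\gamma$-set, continuous images preserve $\gamma$, adjoining a countable set preserves $\gamma$); these hold, though your sketch for the last one implicitly uses the Gerlits--Nagy equivalence of the $\gamma$-property with $\sone(\Om,\Ga)$ rather than the bare definition, and the paper instead quotes Jordan's result that countable increasing unions of $\gamma$-sets are $\gamma$-sets. The paper's proof is shorter and exploits the ambient structure of $\PN$; yours is more portable in that it never leaves the given $X$ and would adapt to spaces where countable dense homogeneity is unavailable.
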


\begin{proof}
Let $D\sub\PN\sm X$ be a countable dense set in $\PN$.
Then the set $X\cup D$ is a $\gamma$-set: A union of a $\gamma$-set with a singleton is obviously a singleton, and a countable increasing union of $\gamma$-sets is again a $\gamma$-set by \cite[Corollary 14]{Jor08}. 
If the set $Q\cup D$ is a $G_{\w_1}$-set in $X\cup D$, then the set $Q$ is a $G_{\w_1}$-set in $X$.
Thus, we may assume that the set $Q$ is dense in $\PN$.
Since the space $\PN$ is countable dense homogeneous, we may also assume that $Q=\Fin$.

Let $\sset{a_\alpha}{\alpha < \w_1} \sub \roth$ be a set from Lemma~\ref{gamma set lemma 1}, applied to the set $X\sm\Fin$.
Fix an element $x\in X\sm \Fin$.
Then there is an ordinal number $\alpha<\w_1$ such that $x\les a_\alpha$.
For a natural number $m$, if $x\cap [m, a_\alpha(m)+1)=\emptyset$, then $x(m)\geq a_\alpha(m)+1>a_\alpha(m)$.
Thus, there are only finitely many $m$'s with the above property.
It follows that
\[
\Fin = \sset{x \in X}{\Forall{\alpha < \w_1}\Forall{n \in \w}\Exists{m \geq n}\End{x \cap [m, a_\alpha(m)+1) = \emptyset}}.
\]
Equivalently, we have
\[
\Fin = \bigcap_{\alpha < \w_1} \bigcap_{n \in \w} \bigcup_{m \geq n} \sset{x \in X}{x \cap [m, a_\alpha(m)+1) = \emptyset},
\]
and thus the set $\Fin$ is a $G_{\w_1}$-set in $X$.
\end{proof}

A set $X\sub\PN$ is \emph{weakly $G_{\w_1}$-concentrated} if
for every family $\mathcal{Q}$ of countable subsets of $X$ which is cofinal with respect to inclusion in the family of all countable subsets of $X$ and for every map $R\colon \cQ\to \Pof(X)$ such that  $R(Q)$ is a $G_{\w_1}$-set in $X$ containing $Q$, there is a family $\mathcal{Q}_1\sub \cQ$ of size $\w_1$ such that $X \sub \Un_{Q \in \mathcal{Q}_1} R(Q)$. The next fact was established in \cite{Zdo2018}.

\blem \label{lem:MillMen}
In the Miller model, each Menger set in $\PN$ is weakly $G_{\w_1}$-concentrated.
\elem

\begin{proof}[{Proof of Theorem~\ref{thm:gamma}}]
Let $X \sub \PN$ be a $\gamma$-set.
Let $\mathcal{Q}$ be the collection of all countable subsets of $X$ and $R\colon\cQ\to \Pof(X)$ be a map such that $R(Q):=Q$ for all sets $Q\in\cQ$.
By Lemma~\ref{gamma set lemma 2}, for each set $Q \in \cQ$ the set $R(Q)$ is a $G_{\w_1}$-set in $X$.
The set $X$ is Menger and, by Lemma~\ref{lem:MillMen}, it is  weakly $G_{\w_1}$-concentrated.
Then there is a family $\cQ_1\sub\cQ$ of size $\w_1$ such that 
\[
X \sub \Un_{Q \in \cQ_1} R(Q) = \Un_{Q \in \cQ_1} Q.
\]
We conclude that $\card{X} \leq \w_1$. 
\end{proof}

\section{Open problems}

We leave the reader with the following question asking 
whether the kind of concentrated sets whose non-existence 
in the Miller model 
we proved above, can exist at all.

\bprb
Is it consistent that there exists a set $X\sub\roth$
of size $|X|>\w_1$ such that for every $f\in\roth$
\begin{itemize}
\item
the set $\{x\in X:x\leq^* f\}$ is countable 
(i.e., $X$ is $K$-Lusin), and 
\item there exists $\psi(f)\in\roth$
such that for every $x\in X$, if $x\not\leq^* f$, then
\[
[\psi(f)(n),\psi(f)(n+1))\cap [x> f]\neq\emptyset
\]
for all but finitely many $n\in\w$?
\end{itemize}
\eprb

Another questions are related to sizes of Rothberger and Hurewicz sets in the Miller model.

\bprb
Is there, in the Miller model, a Rothberger set in $\PN$ of size $\w_2$?
\eprb

\bprb
Is there, in the Miller model, a Hurewicz set in $\PN$, without a homeomorphic copy of $\PN$ inside, of size $\w_2$?
\eprb

\section*{Acknowledgments}
We would like to thank Professor Piotr Zakrzewski who draw our attention to the paper of Bartoszy\'{n}ski and Halbeisen~\cite{BarHal03}.

\end{document}